\numberwithin{equation}{section}
\theoremstyle{plain}	     
\newtheorem{thm}{Theorem}[section] 
\newtheorem{cor}[thm]{Corollary}
\newtheorem{prop}[thm]{Proposition}
\theoremstyle{definition}
\theoremstyle{remark} 
\newtheorem{rem}[thm]{Remark}
\newcommand{\disp}{\displaystyle}
\begin{document}
\title{Legendre-type relations for generalized complete elliptic integrals
\footnote{This work was supported by JSPS KAKENHI Grant Number 24540218.}}
\author{Shingo Takeuchi\\
Department of Mathematical Sciences\\
Shibaura Institute of Technology
\thanks{307 Fukasaku, Minuma-ku,
Saitama-shi, Saitama 337-8570, Japan. \endgraf
{\it E-mail address\/}: shingo@shibaura-it.ac.jp \endgraf
{\it 2010 Mathematics Subject Classification.} 
33E05, 33C75, 34L10}}
\date{}

\maketitle

\begin{abstract}
Legendre's relation for the complete elliptic integrals of the 
first and second kinds is generalized. The proof depends on an application 
of the generalized trigonometric functions and is alternative to
the proof for Elliott's identity.
\end{abstract}

\textbf{Keywords:} 
Legendre's relation,
complete elliptic integrals,
generalized trigonometric functions,
Elliott's identity

%%%%%%%%%%%%%%%%%%%%%%%%%%%%%%%%%%%%%%%%%%%%%%%%%%

\section{Introduction}

Let $k \in [0,1)$. The complete elliptic integrals of the first kind
$$K(k)=\int_0^1 \frac{dt}{\sqrt{(1-t^2)(1-k^2t^2)}}$$
and of the second kind
$$E(k)=\int_0^1 \sqrt{\frac{1-k^2t^2}{1-t^2}}\,dt$$
play important roles in classical analysis. 
In this paper, we consider generalizations of $K(k)$ and $E(k)$ as
$$K_{p,q,r}(k):=\int_0^1 \frac{dt}{(1-t^q)^{1/p} (1-k^qt^q)^{1/r}}$$
and
$$E_{p,q,r}(k):=\int_0^1 \frac{(1-k^qt^q)^{1/r^*}}{(1-t^q)^{1/p}}\,dt,$$
where $p \in \mathbb{P}^*:=(-\infty,0) \cup (1,\infty],\ 
q,\ r \in (1,\infty)$ and $1/s+1/s^*=1$. 
For $p=\infty$ we regard $K_{p,q,r}$ and $E_{p,q,r}$ as 
$$K_{\infty,q,r}(k):=\int_0^1\frac{dt}{(1-k^qt^q)^{1/r}},\quad
E_{\infty,q,r}(k):=\int_0^1(1-k^qt^q)^{1/r^*}\,dt.$$
Under the convention that $1/\infty=0$ and $1/0=\infty$,
we should note that
$s \in \mathbb{P}^*$ if and only if $s^* \in (0,\infty)$,
particularly, $\infty^*=1$.
In case $p=q=r=2$, $K_{p,q,r}(k)$ and $E_{p,q,r}(k)$ are reduced to 
the classical $K(k)$ and $E(k)$, respectively.

There is a lot of literature about 
the generalized complete elliptic integrals.
$K_{p,q,p}$ is introduced
in \cite{T} with a generalization of the Jacobian elliptic function
with a period of $4K_{p,q,p}$ to study a bifurcation problem of a bistable 
reaction-diffusion equation involving $p$-Laplacian.
%In that work, the constant $\pi_{p/2,p}$ appears for $p>2$, 
%which is equal to $\pi_{s,q}$ with $s=p/2$ defined above.
Relationship between $K_{p,q,p}$ and $E_{p,q,p^*}$ has been observed
in \cite{BY,YH}.
Regarding $K_{p,q,p^*}$,
another generalization of Jacobian elliptic function with a period of $K_{p,q,p^*}$
is given and the basis properties for the family of these functions
are shown in \cite{T2}. Moreover, $K_{p,q,p^*}$ is also applied to a problem 
on Bhatia-Li's mean and a curious relation between $K_{p,q,p^*}$ and $E_{p,q,p^*}$
is given in \cite{KT}. 
%Concerning Legendre's relation, $K_{p,p,p^*}$ and $E_{p,p,p^*}$ satisfy
%have been applied
%in \cite{T3,T4} to generalize \eqref{eq:L} and 
%to establish computation formulas of $\pi_p$
%(only for $p=3$ and $4$), the generalized $\pi$ defined below.

It is well known that $K(k)$ and $E(k)$ satisfy the famous \textit{Legendre's relation}
(see, for example, \cite{AAR, BB3, Du}):
\begin{equation}
\label{eq:L}
E(k)K(k')+K(k)E(k')-K(k)K(k')=\frac{\pi}{2},
\end{equation}
where $k'=\sqrt{1-k^2}$.
Our purpose in the present paper 
is to generalize Legendre's relation \eqref{eq:L}
to the generalized complete elliptic integrals above.

To state the results, we will give some notations. 
For $p \in \mathbb{P}^*$ and $q \in (1,\infty)$, let 
$$\pi_{p,q}:=2\int_0^1 \frac{dt}{(1-t^q)^{1/p}}
=\frac{2}{q}B\left(\frac{1}{q},\frac{1}{p^*}\right),$$
where $B$ denotes the beta function.
In particular, $\pi_{\infty,q}=2$ for any $q \in (1,\infty)$.
We write $K_{p,q}:=K_{p,q,q^*},\ E_{p,q}:=E_{p,q,q^*}$ for $p \in \mathbb{P}^*$
and $q \in (1,\infty)$; $K_p:=K_{p,p,p^*},\ E_p:=E_{p,p,p^*},\ 
\pi_p:=\pi_{p,p}$ for $p \in (1,\infty)$.

\begin{thm}
\label{thm:pqrL}
Let $p \in \mathbb{P}^*,\ q,\ r \in (1,\infty)$ and $k \in (0,1)$. Then
\begin{multline}
\label{eq:pqrL}
E_{p,q,r^*}(k)K_{p,r,q^*}(k')
+K_{p,q,r^*}(k)E_{p,r,q^*}(k')\\
-K_{p,q,r^*}(k)K_{p,r,q^*}(k')
=\frac{\pi_{p,q}\pi_{s,r}}{4},
\end{multline}
where $k':=(1-k^q)^{1/r}$ and $1/s=1/p-1/q$.
\end{thm}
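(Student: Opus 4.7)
The plan is to recast the left-hand side of \eqref{eq:pqrL} as a single double integral over $[0,\pi_{p,q}/2]\times[0,\pi_{p,r}/2]$ via the generalized trigonometric substitution, prove the resulting function of $k$ has derivative zero on $(0,1)$, and evaluate its constant value at $k\to 0^+$.

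To set up the double integral, I would apply $t=\sin_{p,q}\theta$ in the two $(p,q,r^*)$-integrals and $t=\sin_{p,r}\phi$ in the two $(p,r,q^*)$-integrals; the identity $\sin_{p,q}'(\theta)=(1-\sin_{p,q}^q\theta)^{1/p}$ (and its $(p,r)$ analog) absorbs the weights $(1-t^q)^{1/p}$ and $(1-t^r)^{1/p}$, turning each of the four integrals into an integral of a power of $A:=1-k^q\sin_{p,q}^q\theta$ or $B:=1-(k')^r\sin_{p,r}^r\phi$. Expanding the three products on the left of \eqref{eq:pqrL} as a double integral and combining via the elementary identity
\[
\frac{A^{1/r}}{B^{1/q^*}}+\frac{B^{1/q}}{A^{1/r^*}}-\frac{1}{A^{1/r^*}B^{1/q^*}}=\frac{A+B-1}{A^{1/r^*}B^{1/q^*}},
\]
together with $(k')^r=1-k^q$ (which makes $A+B-1$ equal the numerator below), collapses the left side of \eqref{eq:pqrL} into
\[
L(k)=\int_0^{\pi_{p,q}/2}\!\int_0^{\pi_{p,r}/2}\frac{1-k^q\sin_{p,q}^q\theta-(k')^r\sin_{p,r}^r\phi}{(1-k^q\sin_{p,q}^q\theta)^{1/r^*}(1-(k')^r\sin_{p,r}^r\phi)^{1/q^*}}\,d\theta\,d\phi.
\]

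Next I would prove $L'(k)\equiv 0$. Differentiating $E_{p,q,r^*}$ under the integral and using $t^q=(1-(1-k^qt^q))/k^q$ yields the clean relation $\frac{d}{dk}E_{p,q,r^*}(k)=\frac{q}{rk}(E_{p,q,r^*}(k)-K_{p,q,r^*}(k))$, and symmetrically $\frac{d}{dk'}E_{p,r,q^*}(k')=\frac{r}{qk'}(E_{p,r,q^*}(k')-K_{p,r,q^*}(k'))$. Obtaining a matching formula for $dK_{p,q,r^*}/dk$ is more delicate: naive differentiation produces an integrand with the non-standard exponent $1/r^*+1$ in the factor $1-k^qt^q$, which I would eliminate by integrating $\frac{d}{dt}[t(1-t^q)^{1/p^*}(1-k^qt^q)^{1/r}]$ on $[0,1]$ (equivalently, via a Gauss contiguous relation for the underlying ${}_2F_1$) to express $dK_{p,q,r^*}/dk$ as a linear combination of $K_{p,q,r^*}$ and $E_{p,q,r^*}$ with $k$-dependent coefficients. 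Substituting the four derivative formulas together with $dk'/dk=-qk^{q-1}/(r(k')^{r-1})$ into $L'(k)$ and simplifying then yields $L'(k)=0$; this closure is the principal technical step.

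Finally I would evaluate $L$ by taking $k\to 0^+$. Since $K_{p,q,r^*}(0)=E_{p,q,r^*}(0)=\pi_{p,q}/2$ and $E_{p,q,r^*}(k)-K_{p,q,r^*}(k)=O(k^q)$, while $K_{p,r,q^*}(k')$ diverges (if at all) at a rate strictly slower than $k^{-q}$, the product $K_{p,r,q^*}(k')[E_{p,q,r^*}(k)-K_{p,q,r^*}(k)]\to 0$. Hence
\[
L(0^+)=\frac{\pi_{p,q}}{2}\,E_{p,r,q^*}(1)=\frac{\pi_{p,q}}{2}\int_0^1(1-t^r)^{-1/s}\,dt=\frac{\pi_{p,q}\,\pi_{s,r}}{4},
\]
using $1/q-1/p=-1/s$ and the definition of $\pi_{s,r}$, which matches the right-hand side of \eqref{eq:pqrL}.
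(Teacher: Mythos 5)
Your overall strategy coincides with the paper's: derive first-order differential relations for the four functions, show that the $k$-derivative of the left-hand side vanishes, and evaluate the constant as $k\to 0^+$. The double-integral reformulation via $\frac{A^{1/r}}{B^{1/q^*}}+\frac{B^{1/q}}{A^{1/r^*}}-\frac{1}{A^{1/r^*}B^{1/q^*}}=\frac{A+B-1}{A^{1/r^*}B^{1/q^*}}$ is correct and a nice observation not in the paper, but you never actually use it. Your formula for $dE_{p,q,r^*}/dk$ is right, and the limit evaluation $L(0^+)=\frac{\pi_{p,q}}{2}E_{p,r,q^*}(1)=\frac{\pi_{p,q}\pi_{s,r}}{4}$ is correct (the paper justifies the vanishing of $(K_{p,q,r^*}-E_{p,q,r^*})K_{p,r,q^*}(k')$ by the explicit bound $\frac{\pi_{p,r}}{2}\,k\,K_{p,q,r^*}(k)$, which is cleaner than your rate comparison but amounts to the same thing).

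The gap sits exactly at the step you yourself call ``the principal technical step,'' and it is left undone in two respects. First, the auxiliary total derivative you propose, $\frac{d}{dt}\bigl[t(1-t^q)^{1/p^*}(1-k^qt^q)^{1/r}\bigr]$, cannot yield $dK_{p,q,r^*}/dk$: differentiating $K_{p,q,r^*}$ under the integral sign produces the exponent $-(1/r^*+1)$ on $1-k^qt^q$, whereas differentiating your bracket only reaches the exponent $1/r-1=-1/r^*$, one power short; the auxiliary must itself carry the factor $(1-k^qt^q)^{-1/r^*}$, as in the paper's choice $-\cos_{p,q}^{p/r^*}{\theta}\,(1-k^q\sin_{p,q}^q{\theta})^{-1/r^*}$, i.e.\ $-\bigl((1-t^q)/(1-k^qt^q)\bigr)^{1/r^*}$ in your variables. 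Second, the claim that substituting the four derivative formulas ``yields $L'(k)=0$'' is precisely what needs to be verified, not asserted: with the correct formula $dK_{p,q,r^*}/dk=\frac{aE_{p,q,r^*}-(a-k^q)K_{p,q,r^*}}{k(k')^r}$, $a=1+q/r-q/p$, and its counterpart with $b=1+r/q-r/p$, the coefficients in $L'(k)$ collapse to the common factor $\frac{qb-ra}{rk(k')^r}$ multiplying $(E_{p,q,r^*}-K_{p,q,r^*})(K'_{p,r,q^*}-E'_{p,r,q^*})$, and the whole argument rests on the identity $qb=ra$ --- this is where the specific pairing of $(p,q,r^*)$ at $k$ with $(p,r,q^*)$ at $k'=(1-k^q)^{1/r}$ is actually used. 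Without the explicit coefficients and this cancellation, the proof does not close.
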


\begin{cor}[Case $q=r$]
\label{cor:q=r}
Let $p \in \mathbb{P}^*,\ q \in (1,\infty)$ and $k \in (0,1)$. Then
\begin{equation}
\label{eq:pqL}
E_{p,q}(k)K_{p,q}(k')
+K_{p,q}(k)E_{p,q}(k')\\
-K_{p,q}(k)K_{p,q}(k')
=\frac{\pi_{p,q}\pi_{s,q}}{4},
\end{equation}
where $k':=(1-k^q)^{1/q}$ and $1/s=1/p-1/q$.
\end{cor}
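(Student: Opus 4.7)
The plan is to obtain Corollary \ref{cor:q=r} as an immediate specialization of Theorem \ref{thm:pqrL} by setting $r = q$. Since Theorem \ref{thm:pqrL} is stated for arbitrary $q, r \in (1,\infty)$ and the corollary is literally the case $r = q$, no further analytic input is required. The proof is essentially notational bookkeeping: verify that every ingredient of \eqref{eq:pqrL} collapses to the corresponding ingredient of \eqref{eq:pqL} under the substitution $r = q$.

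First, I would check the four integrals on the left-hand side. With $r = q$ we have $r^* = q^*$, so $K_{p,q,r^*}(k) = K_{p,q,q^*}(k)$, which by the abbreviation $K_{p,q} := K_{p,q,q^*}$ equals $K_{p,q}(k)$; the same identification gives $E_{p,q,r^*}(k) = E_{p,q}(k)$. For the other two terms, setting $r = q$ turns $K_{p,r,q^*}(k')$ into $K_{p,q,q^*}(k') = K_{p,q}(k')$, and analogously $E_{p,r,q^*}(k') = E_{p,q}(k')$.

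Next, I would check the modulus and the right-hand constant. The complementary modulus $k' = (1-k^q)^{1/r}$ specializes to $(1-k^q)^{1/q}$, exactly as stated in the corollary, and the definition $1/s = 1/p - 1/q$ is unchanged. The right-hand side $\pi_{p,q}\pi_{s,r}/4$ becomes $\pi_{p,q}\pi_{s,q}/4$. Putting these identifications into \eqref{eq:pqrL} yields \eqref{eq:pqL} verbatim.

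There is no genuine obstacle here, since all the work is absorbed in Theorem \ref{thm:pqrL}; the only thing worth a moment's care is that the second index and third index are transposed between the two pairs of integrals in \eqref{eq:pqrL} (one pair uses $(q,r^*)$, the other uses $(r,q^*)$), and one should make sure that after setting $r = q$ both pairs reduce to the same abbreviation $K_{p,q}, E_{p,q}$, which they do.
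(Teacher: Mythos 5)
Your proposal is correct and matches the paper's intent exactly: the corollary is presented as the immediate specialization $r=q$ of Theorem \ref{thm:pqrL}, and the paper offers no further argument beyond the notational identifications you carry out. Your check that both index pairs $(q,r^*)$ and $(r,q^*)$ collapse to $(q,q^*)$, hence to the abbreviations $K_{p,q}$, $E_{p,q}$, is precisely the only point requiring care.
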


\begin{cor}[\cite{T3}, Case $p=q=r$]
Let $p \in (1,\infty)$ and $k \in (0,1)$. Then
\begin{equation}
\label{eq:pL}
E_{p}(k)K_{p}(k')
+K_{p}(k)E_{p}(k')\\
-K_{p}(k)K_{p}(k')
=\frac{\pi_{p}}{2},
\end{equation}
where $k':=(1-k^p)^{1/p}$.
\end{cor}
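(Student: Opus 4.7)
The statement is the specialization of Theorem \ref{thm:pqrL} to the case $p=q=r$, so the plan is simply to substitute these values into \eqref{eq:pqrL} and unwind the definitions.

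First, I would set $q=r=p$ in Theorem \ref{thm:pqrL}. Then $r^*=q^*=p^*$, so the third index in every occurrence of $K$ and $E$ becomes $p^*$, and the abbreviation $K_p=K_{p,p,p^*}$ (respectively $E_p=E_{p,p,p^*}$) defined in the paragraph preceding Theorem \ref{thm:pqrL} applies. Hence
\[
K_{p,q,r^*}(k)=K_{p,p,p^*}(k)=K_p(k),\qquad K_{p,r,q^*}(k')=K_{p,p,p^*}(k')=K_p(k'),
\]
and the analogous identities hold for $E$. The complementary modulus becomes $k'=(1-k^q)^{1/r}=(1-k^p)^{1/p}$, as claimed.

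Next, I would handle the right-hand side. With $p=q$, the auxiliary parameter $s$ defined by $1/s=1/p-1/q$ satisfies $1/s=0$, i.e. $s=\infty$. From the definition of $\pi_{p,q}$ immediately before Theorem \ref{thm:pqrL}, we have $\pi_{\infty,r}=2$ for every $r\in(1,\infty)$, and in particular $\pi_{s,r}=\pi_{\infty,p}=2$. Moreover $\pi_{p,q}=\pi_{p,p}=\pi_p$. Therefore the right-hand side of \eqref{eq:pqrL} reduces to
\[
\frac{\pi_{p,q}\,\pi_{s,r}}{4}=\frac{\pi_p\cdot 2}{4}=\frac{\pi_p}{2},
\]
which matches the right-hand side of \eqref{eq:pL}.

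There is no real obstacle: the content of the corollary is entirely contained in Theorem \ref{thm:pqrL}, and the verification consists of checking the reductions of the indices, the modulus $k'$, and the numerical value of $\pi_{\infty,p}$. The only point worth flagging is that one must invoke the convention $1/\infty=0$ (introduced in the paper) to make sense of $s=\infty$ in the relation $1/s=1/p-1/q$, and the companion fact $\pi_{\infty,r}=2$ supplied in the definition of $\pi_{p,q}$.
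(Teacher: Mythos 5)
Your proposal is correct and coincides with the paper's (implicit) argument: the corollary is stated without separate proof precisely because it is the direct specialization $q=r=p$ of Theorem \ref{thm:pqrL}, and your verification of the index reductions, of $k'=(1-k^p)^{1/p}$, and of $\pi_{s,r}=\pi_{\infty,p}=2$ giving the right-hand side $\pi_p/2$ is exactly what is needed.
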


\begin{rem}
Using \eqref{eq:pL}, the author establishes computation formulas of $\pi_p$
for $p=3$ in \cite{T3}; for $p=4$ in \cite{T4}.
\end{rem}

In fact, \eqref{eq:pqrL} is equivalent to \textit{Elliott's identity}
\eqref{eq:elliott} below.
The advantage of our result lies in the facts that  
it is understandable without acknowledge of hypergeometric functions
and that its proof gives an alternative proof for Elliott's identity with straightforward calculations. 
%Our proof of Theorem \ref{thm:pqrL} is, however, 
%not completely satisfactory,
%because it depends upon prior knowledge of the identity.   

%%%%%%%%%%%%%%%%%%%%%%%%%%%%%%%%%%%%%%%

\section{Proof of Theorem \ref{thm:pqrL}}

The following property immediately follows from the definitions of $K_{p,q,r}$ and $E_{p,q,r}$.

\begin{prop}
\label{prop:behaviour}
Let $p \in \mathbb{P}^*,\ q,\ r \in (1,\infty)$.
Then, 
$K_{p,q,r}(k)$ is increasing on $[0,1)$ and
\begin{align*}
K_{p,q,r}(0)&=\frac{\pi_{p,q}}{2},\\
\lim_{k \to 1-0}K_{p,q,r}(k)&=
\begin{cases}
\infty & \mbox{if}\ 1/p+1/r \ge 1,\\
\pi_{u,q}/2\ (1/u=1/p+1/r) & \mbox{if}\ 1/p+1/r<1;
\end{cases}
\end{align*}
and $E_{p,q,r}(k)$ is decreasing on $[0,1]$ and
$$E_{p,q,r}(0)=\frac{\pi_{p,q}}{2},\quad
E_{p,q,r}(1)
=\frac{\pi_{v,q}}{2}\ (1/v=1/p-1/r^*).$$
%\frac{1}{q}B\left(\frac{1}{q},\frac{1}{p^*}+\frac{1}{r^*}\right).$$
\end{prop}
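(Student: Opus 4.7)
The proposition decomposes into six assertions about the integrals $K_{p,q,r}$ and $E_{p,q,r}$, and I would verify each by inspecting the integrand and invoking a standard convergence theorem.

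\textbf{Monotonicity.} For every fixed $t \in (0,1)$ and $k \in [0,1)$, the map $k \mapsto (1-k^q t^q)^{-1/r}$ is strictly increasing (since $r \in (1,\infty)$ gives $1/r > 0$), and the map $k \mapsto (1-k^q t^q)^{1/r^*}$ is strictly decreasing (since $r^* \in (1,\infty)$ gives $1/r^* > 0$). Integration in $t$ preserves strict monotonicity, so $K_{p,q,r}$ is increasing on $[0,1)$ and $E_{p,q,r}$ is decreasing on $[0,1)$. Extension of the latter to $[0,1]$ will follow once the value $E_{p,q,r}(1)$ is identified below.

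\textbf{Values at $k=0$.} Both $K_{p,q,r}(0)$ and $E_{p,q,r}(0)$ reduce to $\int_0^1 (1-t^q)^{-1/p}\,dt$, which is $\pi_{p,q}/2$ directly from the definition of $\pi_{p,q}$; the hypothesis $p \in \mathbb{P}^*$ ensures $1/p^* > 0$, so the beta-integral representation is legitimate.

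\textbf{Limit of $K_{p,q,r}$ as $k \to 1^-$.} By the pointwise monotone increase of the integrand in $k$ together with the monotone convergence theorem,
\[
\lim_{k \to 1^-} K_{p,q,r}(k)=\int_0^1\frac{dt}{(1-t^q)^{1/p+1/r}}.
\]
If $1/p+1/r \ge 1$, the integrand is non-integrable at $t=1$, yielding $+\infty$. If $1/p+1/r < 1$, setting $1/u = 1/p+1/r$ gives $u \in \mathbb{P}^*$, and the integral equals $\pi_{u,q}/2$ by the definition of $\pi_{u,q}$.

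\textbf{Value of $E_{p,q,r}$ at $k=1$.} Direct substitution gives
\[
E_{p,q,r}(1)=\int_0^1 (1-t^q)^{1/r^*-1/p}\,dt.
\]
Setting $1/v=1/p-1/r^*$, one has $1/v<1$ (because $1/p<1$ and $1/r^*>0$), so $v \in \mathbb{P}^*$ and the right-hand side equals $\pi_{v,q}/2$. Continuity of $E_{p,q,r}$ at $k=1$, which delivers the decreasing property on the closed interval $[0,1]$, is obtained via dominated convergence with the $k$-independent dominator $(1-t^q)^{-1/p}$, integrable by the same argument as in the step at $k=0$. There is no substantive obstacle in this proposition; the only care required is bookkeeping of the exponents to ensure, at each step, that the auxiliary parameter ($u$ or $v$) lies in $\mathbb{P}^*$ so that the symbol $\pi_{\cdot,q}$ is legitimately defined.
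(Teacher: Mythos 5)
Your proof is correct and is essentially the direct verification the paper has in mind: the paper states that Proposition \ref{prop:behaviour} ``immediately follows from the definitions'' and omits the details, and your argument (pointwise monotonicity of the integrands in $k$, evaluation at the endpoints, monotone/dominated convergence for the limits, and the check that $u$ and $v$ lie in $\mathbb{P}^*$) is exactly the bookkeeping being left to the reader. No discrepancy with the paper's approach.
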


For $p \in \mathbb{P}^*$ and $q \in (1,\infty)$,
the \textit{generalized trigonometric function} $\sin_{p,q}{x}$ is the inverse function of
$$\sin_{p,q}^{-1}{x}:=
\begin{cases}
\disp \int_0^x \frac{dt}{(1-t^q)^{1/p}} & \mbox{if}\ p \neq \infty,\\
x & \mbox{if}\ p=\infty.
\end{cases}
$$
Clearly, $\sin_{p,q}{x}$ is increasing function from $[0,\pi_{p,q}/2]$ onto $[0,1]$.
%,
%where
%$$\pi_{p,q}:=2\sin_{p,q}^{-1}{1}
%=2 \displaystyle \int_0^1 \dfrac{dt}{(1-t^q)^{1/p}}
%=\frac2q B\left(\frac{1}{q},\frac{1}{p^*}\right).$$
%We also define $\pi_{p,q}$ for negative $p$ by the right-hand side of 
%the equation above. Moreover, $\pi_{\pm \infty,q}:=2$.

For $p=q=2$, $\sin_{p,q}{\theta}$ and $\pi_{p,q}=2\sin_{p,q}^{-1}{1}$ are 
identical to the classical $\sin{\theta}$ and $\pi$, respectively.
Moreover, $\sin_{p,q}{\theta}$ and $\pi_{p,q}$ 
play important roles to express the solutions $(\lambda,u)$ 
of inhomogeneous eigenvalue problem of $p$-Laplacian
$-(|u'|^{p-2}u')'=\lambda |u|^{q-2}u,\ p,\ q \in (1,\infty)$,
with a boundary condition (see \cite{DM,LE,T}
and the references given there).

For $p \neq \infty$ and $x \in (0,\pi_{p,q}/2)$, we also define 
$\cos_{p,q}{x}:=(\sin_{p,q}{x})'$.
It is easy to check that for $x \in (0,\pi_{p,q}/2)$,
\begin{gather*}
\cos_{p,q}^p{x}+\sin_{p,q}^q{x}=1,\quad 
%(\sin_{p,q}{x})'=\cos_{p,q}{x},\notag \\
(\cos_{p,q}{x})'=-\frac{q}{p}\sin_{p,q}^{q-1}{x}\cos_{p,q}^{2-p}{x}.
%(\cos_{p,q}^{p-1}{x})'=-\frac{q}{p^*}\sin_{p,q}^{q-1}{x}.
\end{gather*}

Now, we apply the generalized trigonometric function to the generalized
complete elliptic integrals.
For $p \in \mathbb{P}^*$ and $q,\ r \in (1,\infty)$, 
using $\sin_{p,q}{\theta}$ and $\pi_{p,q}$,  
we can express $K_{p,q,r}(k)$ and $E_{p,q,r}(k)$ 
as follows.
\begin{align*}
K_{p,q,r}(k)&=\int_0^{\pi_{p,q}/2}
\frac{d\theta}{(1-k^q\sin_{p,q}^{q}{\theta})^{1/r}},\\
E_{p,q,r}(k)&=\int_0^{\pi_{p,q}/2}
(1-k^q\sin_{p,q}^{q}{\theta})^{1/r^*}\,d\theta.
\end{align*}

Then, we see that 
the functions $K_{p,q,r}(k)$ and $E_{p,q,r}(k)$ satisfy a system of linear 
differential equations.
\begin{prop}
\label{prop:p-differential}
Let $p \in \mathbb{P}^*,\ q,\ r \in (1,\infty)$. Then,
\begin{align*}
\frac{dE_{p,q,r}}{dk}&=\frac{q(E_{p,q,r}-K_{p,q,r})}{r^*k},\\
\frac{dK_{p,q,r}}{dk}&=\dfrac{aE_{p,q,r}-(a-k^q)K_{p,q,r}}{k(1-k^q)},
\end{align*}
where $a:=1+q/r^*-q/p$.
\end{prop}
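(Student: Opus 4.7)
The plan is to differentiate the trigonometric representations
$$K_{p,q,r}(k) = \int_0^{\pi_{p,q}/2} \frac{d\theta}{(1-k^q\sin_{p,q}^{q}{\theta})^{1/r}}, \qquad E_{p,q,r}(k) = \int_0^{\pi_{p,q}/2} (1-k^q\sin_{p,q}^{q}{\theta})^{1/r^*}\,d\theta$$
under the integral sign and then reduce the resulting integrals back to $K_{p,q,r}$ and $E_{p,q,r}$ using the generalized Pythagorean identity $\cos_{p,q}^p{\theta} + \sin_{p,q}^q{\theta} = 1$ together with the derivative relation $(\cos_{p,q}{\theta})' = -(q/p)\sin_{p,q}^{q-1}{\theta}\cos_{p,q}^{2-p}{\theta}$.

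For the $E$-formula, differentiation under the integral sign yields an integrand with $(1-k^q\sin_{p,q}^q{\theta})^{-1/r}$ in the denominator (using $1/r^*-1=-1/r$) and an extra factor $\sin_{p,q}^q{\theta}$. The splitting
$$k^q\sin_{p,q}^q{\theta} = 1 - (1-k^q\sin_{p,q}^q{\theta})$$
turns $k^q$ times this integral into $K_{p,q,r}-E_{p,q,r}$, which rearranges at once to $dE_{p,q,r}/dk = q(E_{p,q,r}-K_{p,q,r})/(r^*k)$.

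For the $K$-formula I differentiate $K_{p,q,r}$ to obtain an integral with $(1-k^q\sin_{p,q}^q{\theta})^{-1-1/r}$, multiply through by $k(1-k^q)$ as in the target, and decompose
$$1-k^q = (1-k^q\sin_{p,q}^q{\theta}) - k^q\cos_{p,q}^p{\theta}.$$
The first piece collapses to $(q/r)(K_{p,q,r}-E_{p,q,r})$ by the splitting trick above. What remains is
$$I := \int_0^{\pi_{p,q}/2}\frac{\sin_{p,q}^q{\theta}\cos_{p,q}^p{\theta}}{(1-k^q\sin_{p,q}^q{\theta})^{1+1/r}}\,d\theta.$$
The central observation is that
$$\frac{d}{d\theta}\bigl[(1-k^q\sin_{p,q}^q{\theta})^{-1/r}\bigr] = \frac{qk^q}{r}\sin_{p,q}^{q-1}{\theta}\cos_{p,q}{\theta}\,(1-k^q\sin_{p,q}^q{\theta})^{-1-1/r},$$
so $I$ equals $r/(qk^q)$ times the integral of this derivative against $\sin_{p,q}{\theta}\cos_{p,q}^{p-1}{\theta}$. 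Integrating by parts, the boundary terms vanish at $\theta=0$ (from $\sin_{p,q}(0)=0$) and at $\theta=\pi_{p,q}/2$ (from $\sin_{p,q}(\pi_{p,q}/2)\cos_{p,q}^{p-1}(\pi_{p,q}/2)=0$, valid both for $p>1$ where $\cos_{p,q}\to 0$ and for $p<0$ where $\cos_{p,q}\to\infty$ with $p-1<0$). The remaining derivative is
$$\frac{d}{d\theta}[\sin_{p,q}{\theta}\cos_{p,q}^{p-1}{\theta}] = \cos_{p,q}^p{\theta} - \frac{q(p-1)}{p}\sin_{p,q}^q{\theta} = 1 - (1+q/p^*)\sin_{p,q}^q{\theta}.$$
Applying the splitting trick once more expresses $I$ as a combination of $K_{p,q,r}$ and $E_{p,q,r}$. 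Assembling the pieces and checking that the coefficient of $K_{p,q,r}-E_{p,q,r}$ simplifies to $(1+q/p^*) - q/r = 1 + q/r^* - q/p = a$ yields the stated derivative of $K_{p,q,r}$.

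The main obstacle is identifying the correct integration-by-parts pairing: multiplying the derivative identity above by $\sin_{p,q}{\theta}\cos_{p,q}^{p-1}{\theta}$ is the natural combination that reconstitutes both powers $\sin_{p,q}^q$ and $\cos_{p,q}^p$ in the integrand of $I$, and it is not obvious a priori that $(\sin_{p,q}{\theta}\cos_{p,q}^{p-1}{\theta})'$ collapses to an affine function of $\sin_{p,q}^q{\theta}$. The degenerate case $p=\infty$, where $\cos_{\infty,q}\equiv 1$, is handled in parallel by integration by parts in the $t$ variable, replacing $\sin_{p,q}{\theta}\cos_{p,q}^{p-1}{\theta}$ by $t(1-t^q)$, whose derivative $1-(q+1)t^q$ yields the same final formula with $a = 1 + q/r^*$.
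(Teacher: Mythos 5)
Your proof is correct and follows essentially the same route as the paper: the trigonometric representation, differentiation under the integral sign, the splitting $k^q\sin_{p,q}^q\theta=1-(1-k^q\sin_{p,q}^q\theta)$, and an integration by parts resolved via the Pythagorean identity, with the $p=\infty$ case treated in parallel. The only difference is organizational: the paper performs a single integration by parts against the combined antiderivative $-\cos_{p,q}^{p/r}\theta\,(1-k^q\sin_{p,q}^q\theta)^{-1/r}$, whose $\theta$-derivative already encodes your explicit decomposition $1-k^q=(1-k^q\sin_{p,q}^q\theta)-k^q\cos_{p,q}^p\theta$, so the two computations are algebraically identical.
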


\begin{proof}
We consider the case $p \neq \infty$.
Differentiating $E_{p,q,r}(k)$ we have
\begin{align*}
\frac{dE_{p,q,r}}{dk}
%&=\int_0^{\pi_{p,q}/2}
%\frac{d}{dk}(1-k^q\sin_{p,q}^q{\theta})^{1/r^*}\,d\theta\\
&=\frac{q}{r^*}\int_0^{\pi_{p,q}/2}
\dfrac{-k^{q-1}\sin_{p,q}^q{\theta}}{(1-k^q\sin_{p,q}^q{\theta})
^{1/r}}\,d\theta
%&=\frac{q}{r^*k} \int_0^{\pi_{p,q}/2}
%\dfrac{1-k^q\sin_{p,q}^q{\theta}-1}{(1-k^q\sin_{p,q}^q{\theta})^{1/r}}
%\,d\theta\\
=\frac{q}{r^*k} (E_{p,q,r}-K_{p,q,r}).
\end{align*}

Next, for $K_{p,q,r}(k)$ 
\begin{align*}
\frac{dK_{p,q,r}}{dk}
=\frac{q}{r}\int_0^{\pi_{p,q}/2}
\dfrac{k^{q-1}\sin_{p,q}^q{\theta}}
{(1-k^q\sin_{p,q}^q{\theta})^{1+1/r}}\,d\theta.
\end{align*}
Here we see that 
\begin{align*}
& \frac{d}{d\theta} 
\left(\frac{-\cos_{p,q}^{p/r}{\theta}}
{(1-k^q\sin_{p,q}^q{\theta})^{1/r}}\right)
%&=\frac{(q/r)\sin_{p,q}^{q-1}{\theta}\cos_{p,q}^{1-p/r^*}{\theta}
%(1-k^q\sin_{p,q}^q{\theta})
%-(q/r)k^q\sin_{p,q}^{q-1}{\theta}\cos_{p,q}^{p/r+1}{\theta}}
%{(1-k^q\sin_{p,q}^q{\theta})^{1+1/r}}\\
=\frac{q(1-k^q)\sin_{p,q}^{q-1}{\theta}\cos_{p,q}^{1-p/r^*}{\theta}}
{r(1-k^q\sin_{p,q}^q{\theta})^{1+1/r}},\\
& \lim_{\theta \to \pi_{p,q}/2} \cos_{p,q}^{p-1}{\theta}
=\lim_{\theta \to \pi_{p,q}/2} (1-\sin_{p,q}^q{\theta})^{1/p^*}=0;
\end{align*}
so that we use integration by parts as
\begin{align*}
\frac{dK_{p,q,r}}{dk}
&=\frac{k^{q-1}}{1-k^q}\int_0^{\pi_{p,q}/2}
\frac{d}{d\theta} 
\left(\frac{-\cos_{p,q}^{p/r}{\theta}}
{(1-k^q\sin_{p,q}^q{\theta})^{1/r}}\right)
\sin_{p,q}{\theta}\cos_{p,q}^{p/r^*-1}{\theta}\,d\theta\\
&=\frac{k^{q-1}}{1-k^q} \left[\frac{-\sin_{p,q}{\theta}\cos_{p,q}^{p-1}{\theta}}
{(1-k^q\sin_{p,q}^q{\theta})^{1/r}}
\right]_0^{\pi_{p,q}/2}\\
& \qquad
+\frac{k^{q-1}}{1-k^q} \int_0^{\pi_{p,q}/2} 
\frac{\cos_{p,q}^{p/r}{\theta}}{(1-k^q\sin_{p,q}^q{\theta})^{1/r}}
\left(\cos_{p,q}^{p/r^*}{\theta}-
%q\left(\frac{1}{r}-\frac{1}{p}\right)
\frac{(q/r^*-q/p)\sin_{p,q}^q{\theta}}{\cos_{p,q}^{p/r}{\theta}}\right)\,d\theta\\
&=\frac{k^{q-1}}{1-k^q} \int_0^{\pi_{p,q}/2} 
\frac{\cos_{p,q}^p{\theta}-(q/r^*-q/p)\sin_{p,q}^q{\theta}}
{(1-k^q\sin_{p,q}^q{\theta})^{1/r}}\,d\theta\\
&=\frac{k^{q-1}}{1-k^q} \int_0^{\pi_{p,q}/2}
\frac{(1+q/r^*-q/p)(1-k^q\sin_{p,q}^q{\theta})-(1+q/r^*-q/p-k^q)}{k^q(1-k^q\sin_{p,q}^q{\theta})^{1/r}}\,d\theta\\
&=\frac{(1+q/r^*-q/p)E_{p,q,r}-(1+q/r^*-q/p-k^q)K_{p,q,r}}{k(1-k^q)}.
\end{align*}

The case $p=\infty$ is proved similarly. Indeed,
\begin{align*}
\frac{dE_{\infty,q,r}}{dk}
%&=\int_0^{\pi_{p,q}/2}
%\frac{d}{dk}(1-k^q\sin_{p,q}^q{\theta})^{1/r^*}\,d\theta\\
&=\frac{q}{r^*}\int_0^1
\dfrac{-k^{q-1}\theta^q}{(1-k^q\theta^q)
^{1/r}}\,d\theta
%&=\frac{q}{r^*k} \int_0^{\pi_{p,q}/2}
%\dfrac{1-k^q\sin_{p,q}^q{\theta}-1}{(1-k^q\sin_{p,q}^q{\theta})^{1/r}}
%\,d\theta\\
=\frac{q}{r^*k} (E_{\infty,q,r}-K_{\infty,q,r})
\end{align*}
and
\begin{align*}
\frac{dK_{\infty,q,r}}{dk}
&=\frac{q}{r}\int_0^1
\dfrac{k^{q-1}\theta^q}
{(1-k^q\theta^q)^{1+1/r}}\,d\theta\\
%\end{align*}
%Here we see that 
%\begin{align*}
%& \frac{d}{d\theta} 
%\left(\frac{-\cos_{p,q}^{p/r^*}{\theta}}
%{(1-k^q\sin_{p,q}^q{\theta})^{1/r^*}}\right)
%&=\frac{(q/r)\sin_{p,q}^{q-1}{\theta}\cos_{p,q}^{1-p/r^*}{\theta}
%(1-k^q\sin_{p,q}^q{\theta})
%-(q/r)k^q\sin_{p,q}^{q-1}{\theta}\cos_{p,q}^{p/r+1}{\theta}}
%{(1-k^q\sin_{p,q}^q{\theta})^{1+1/r}}\\
%=\frac{q(1-k^q)\sin_{p,q}^{q-1}{\theta}\cos_{p,q}^{1-p/r}{\theta}}
%{r^*(1-k^q\sin_{p,q}^q{\theta})^{1+1/r^*}},\\
%& \lim_{\theta \to \pi_{p,q}/2} \cos_{p,q}^{p-1}{\theta}
%=\lim_{\theta \to \pi_{p,q}/2} (1-\sin_{p,q}^q{\theta})^{1/p^*}=0;
%\end{align*}
%so that we use integration by parts as
%\begin{align*}
%\frac{dK_{p,q,r}}{dk}
&=\frac{k^{q-1}}{1-k^q}\int_0^1
\frac{d}{d\theta} 
\left(-\left(\frac{1-\theta^q}{1-k^q\theta^q}\right)^{1/r}\right)
\theta(1-\theta^q)^{1/r^*}\,d\theta\\
&=\frac{k^{q-1}}{1-k^q} \left[\frac{-\theta(1-\theta^q)}
{(1-k^q\theta^q)^{1/r}}\right]_0^1\\
& \qquad
+\frac{k^{q-1}}{1-k^q} \int_0^1 
\left(\frac{1-\theta^q}{1-k^q\theta^q}\right)^{1/r}
\left((1-\theta^q)^{1/r^*}-\frac{(q/r)\theta^q}{(1-\theta^q)^{1/r}}\right)\,d\theta\\
&=\frac{k^{q-1}}{1-k^q} \int_0^1 
\frac{1-\theta^q-(q/r)\theta^q}
{(1-k^q\theta^q)^{1/r}}\,d\theta\\
&=\frac{k^{q-1}}{1-k^q} \int_0^1
\frac{(1+q/r^*)(1-k^q\theta^q)-(1+q/r^*-k^q)}{k^q(1-k^q\theta^q)^{1/r}}\,d\theta\\
&=\frac{(1+q/r^*)E_{\infty,q,r}-(1+q/r^*-k^q)K_{\infty,q,r}}{k(1-k^q)}.
\end{align*}
This completes the proof.
\end{proof}

%%%%%

Proposition \ref{prop:p-differential} now yields 
Theorem \ref{thm:pqrL}.

\begin{proof}[Proof of Theorem \ref{thm:pqrL}]
Let $k':=(1-k^q)^{1/r},\ E'_{p,r,q^*}(k):=E_{p,r,q^*}(k')$ and 
$K'_{p,r,q^*}(k):=K_{p,r,q^*}(k')$.
As $dk'/dk=-(q/r)k^{q-1}/(k')^{r-1}$,
Proposition \ref{prop:p-differential} gives
\begin{align*}
\frac{dE_{p,q,r^*}}{dk}&=\frac{q(E_{p,q,r^*}-K_{p,q,r^*})}{rk},\\
\frac{dK_{p,q,r^*}}{dk}&=\dfrac{aE_{p,q,r^*}-(a-k^q)K_{p,q,r^*}}{k(k')^r},\\
\frac{dE'_{p,r,q^*}}{dk}&=\frac{k^{q-1}(-E'_{p,r,q^*}+K'_{p,r,q^*})}{(k')^r},\\
\frac{dK'_{p,r,q^*}}{dk}&=\frac{q(-bE'_{p,r,q^*}+(b-(k')^r)K'_{p,r,q^*})}{rk(k')^r},
\end{align*}
where $a:=1+q/r-q/p$ and $b:=1+r/q-r/p$.

We denote the left-hand side of \eqref{eq:pqrL} by $L(k)$.
A direct computation shows that
\begin{align*}
\frac{d}{dk} & L(k)\\
&  = 
\frac{q(E_{p,q,r^*}-K_{p,q,r^*})}{rk} \cdot K'_{p,r,q^*}
+E_{p,q,r^*} \cdot \frac{q(-bE'_{p,r,q^*}+(b-(k')^r)K'_{p,r,q^*})}{rk(k')^r}\\
& \quad 
+\dfrac{aE_{p,q,r^*}-(a-k^q)K_{p,q,r^*}}{k(k')^r} \cdot E'_{p,r,q^*}
+K_{p,q,r^*} \cdot \frac{k^{q-1}(-E'_{p,r,q^*}+K'_{p,r,q^*})}{(k')^r} \\
& \qquad 
-\dfrac{aE_{p,q,r^*}-(a-k^q)K_{p,q,r^*}}{k(k')^r} \cdot K'_{p,r,q^*}
-K_{p,q,r^*} \cdot \frac{q(-bE'_{p,r,q^*}+(b-(k')^r)K'_{p,r,q^*})}{rk(k')^r}\\
& =
\left(\frac{q}{rk}+\frac{q(b-(k')^r)}{rk(k')^r}-\frac{a}{k(k')^r}\right)E_{p,q,r^*}K'_{p,r,q^*}\\
& \quad
+\left(-\frac{q}{rk}+\frac{k^{q-1}}{(k')^r}+\frac{a-k^q}{k(k')^r}-\frac{q(b-(k')^r)}{rk(k')^r}\right)K_{p,q,r^*}K'_{p,r,q^*}\\
& \qquad
+\left(-\frac{qb}{rk(k')^r}+\frac{a}{k(k')^r}\right)E_{p,q,r^*}E'_{p,r,q^*}\\
& \qquad \quad
+\left(-\frac{a-k^q}{k(k')^r}-\frac{k^{q-1}}{(k')^r}+\frac{qb}{rk(k')^r}\right)K_{p,q,r^*}E'_{p,r,q^*}\\
& =
\frac{qb-ra}{rk(k')^r}(E_{p,q,r^*}K'_{p,r,q^*}-K_{p,q,r^*}K'_{p,r,q^*}-E_{p,q,r^*}E'_{p,r,q^*}+K_{p,q,r^*}E'_{p,r,q^*}).
\end{align*}
Since $qb-ra=0$, we see that $dL/dk=0$.
Thus $L(k)$ is a constant $C$.

We will evaluate $C$ as follows. 
Since
\begin{align*}
|(K_{p,q,r^*} & -E_{p,q,r^*})K'_{p,r,q^*}| \\
&=\int_0^{\pi_{p,q}/2} 
\left(\frac{1}{(1-k^q\sin_{p,q}^q{\theta})^{1/r^*}}
-(1-k^q\sin_{p,q}^q{\theta})^{1/r}\right)\,d\theta\\
& \qquad \times \int_0^{\pi_{p,r}/2} 
\frac{d\theta}{(1-(k')^r\sin_{p,r}^r{\theta})^{1/q^*}}\\
&=\int_0^{\pi_{p,q}/2} 
\frac{k^q\sin_{p,q}^q{\theta}}{(1-k^q\sin_{p,q}^q{\theta})^{1/r^*}}\,d\theta \cdot
\int_0^{\pi_{p,r}/2} 
\frac{d\theta}{(\cos_{p,r}^p{\theta}+k^q\sin_{p,r}^r{\theta})^{1/q^*}}\\
& \leq k^q K_{p,q,r^*}(k) \cdot \frac{1}{k^{q-1}}\frac{\pi_{p,r}}{2}\\
& =\frac{\pi_{p,r}}{2}kK_{p,q,r^*}(k),
\end{align*}
we obtain $\lim_{k \to +0}(K_{p,q,r^*}-E_{p,q,r^*})K'_{p,r,q^*}=0$.
Therefore, from Proposition \ref{prop:behaviour} 
$$C=\lim_{k \to +0}K_{p,q,r^*}E'_{p,r,q^*}
=K_{p,q,r^*}(0)E_{p,r,q^*}(1)
=\frac{\pi_{p,q}\pi_{s,r}}{4},$$
%4r}B\left(\frac{1}{r},\frac{1}{p^*}+\frac{1}{q}\right).$$
where $1/s=1/p-1/q$. Thus, we conclude the assertion.
\end{proof}

Finally, we will give a remark for Theorem \ref{thm:pqrL}. 
From the series expansion and the termwise integration, 
it is possible to express the generalized complete elliptic integrals
by Gaussian hypergeometric functions
\begin{align*}
K_{p,q,r}(k)&=\frac{\pi_{p,q}}{2}F\left(\frac{1}{q},\frac{1}{r};\frac{1}{p^*}+\frac{1}{q};k^q\right),\\
E_{p,q,r}(k)&=\frac{\pi_{p,q}}{2}F\left(\frac{1}{q},-\frac{1}{r^*};\frac{1}{p^*}+\frac{1}{q};k^q\right).
\end{align*}
By these expressions and letting $1/p=1/2-b,\ 1/q=1/2+a,\ 1/r=1/2-c$ and $k^q=x$
in \eqref{eq:pqrL}, we obtain \textit{Elliott's identity}
(see Elliott \cite{El}; see also \cite{AQVV}, \cite[Theorem 3.2.8]{AAR} and 
\cite[(13) p.\,85]{EMOT}):
\begin{multline}
\label{eq:elliott}
F\left({{1/2+a,-1/2-c}\atop{a+b+1}} ;x\right)
F\left({{1/2-a,1/2+c}\atop{b+c+1}} ;1-x\right)\\
+F\left({{1/2+a,1/2-c}\atop{a+b+1}};x\right)
F\left({{-1/2-a,1/2+c}\atop{b+c+1}};1-x\right)\\
-F\left({{1/2+a,1/2-c}\atop{a+b+1}};x\right)
F\left({{1/2-a,1/2+c}\atop{b+c+1}};1-x\right)\\
=\frac{\Gamma(a+b+1)\Gamma(b+c+1)}{\Gamma(a+b+c+3/2)\Gamma(b+1/2)}
\end{multline}
for $|a|,\,|c|<1/2$ and $b \in (-1/2,\infty)$,
where $\Gamma$ denotes the gamma function.
Also, letting $1/p=2-c-a$ and $1/q=1-a$ in \eqref{eq:pqL} of 
Corollary \ref{cor:q=r}, we have the identity of \cite[Corollary 3.13 (5)]{AQVV}
for $a \in (0,1)$ and $c \in (1-a,\infty)$.
A series of Vuorinen's works on Elliott's identity with his coauthors 
starting from \cite{AQVV}
deals with the concavity/convexity properties of certain related functions
to the left-hand side of \eqref{eq:elliott}.

%For $a=b=c=0$, \eqref{eq:elliott} coincides with Legendre's relation \eqref{eq:L}.

%\begin{multline}
%\label{eq:elliott}
%F\left({{\frac12+a,-\frac12-c}\atop{a+b+1}} ;x\right)
%F\left({{\frac12-a,c+\frac12}\atop{b+c+1}} ;1-x\right)\\
%+F\left({{a+\frac12,\frac12-c}\atop{a+b+1}};x\right)
%F\left({{-(a+\frac12),c+\frac12}\atop{b+c+1}};1-x\right)\\
%-F\left({{a+\frac12,\frac12-c}\atop{a+b+1}};x\right)
%F\left({{\frac12-a,c+\frac12}\atop{b+c+1}};1-x\right)\\
%=\frac{\Gamma(a+b+1)\Gamma(b+c+1)}{\Gamma(a+b+c+\frac32)\Gamma(b+\frac12)},
%\end{multline}
%Of course, from Proposition \ref{prop:hypergeometricexpression} and Elliott's
%identity with $a=-b=c=1/p-1/2$ and $x=k^p$,
%we are also able to obtain Theorem \ref{thm:p-legendre}.

%%%%

% \newpage

% Abbreviated version of the title for the running head:

% ``Generalized elliptic functions and its applications to
% the $p$-Laplacian''
% \bigskip\\

% Name and mailing address of the author to whom proofs should be sent:

% Shingo Takeuchi

% shingo@cc.kogakuin.ac.jp

\end{document}